\theoremstyle{definition}
\newtheorem{definition}{Definition}[section]
\newtheorem{conjecture}[definition]{Conjecture}
\newtheorem{example}[definition]{Example}
\theoremstyle{plain}
\newtheorem{lemma}[definition]{Lemma}
\newtheorem{proposition}[definition]{Proposition}
\newtheorem{theorem}[definition]{Theorem}
\renewcommand{\tocsection}[3]{%
\indentlabel{\@ifnotempty{#2}{\makebox[1.50em][l]{\ignorespaces#1#2.}}}#3}
\renewcommand{\tocsubsection}[3]{%
\indentlabel{\@ifnotempty{#2}{\hspace*{1.50em}\makebox[2.25em][l]{\ignorespaces#1#2.}}}#3}
\renewcommand{\tocsubsubsection}[3]{%
\indentlabel{\@ifnotempty{#2}{\hspace*{3.75em}\makebox[3.00em][l]{\ignorespaces#1#2.}}}#3}
\begin{document}

\title[Algebraic identities for linear operators]
{Algebraic identities for linear operators on associative triple systems (long version)}

\author{Murray R. Bremner}

\address{Department of Mathematics and Statistics,
University of Saskatchewan,
Saskatoon, Canada}

\email{bremner@math.usask.ca}

\subjclass[2010]{Primary
47C05.  	
Secondary
15A54,  	
17A40,  	
39B42,  	
47-08.  	
}

\keywords{%
Linear operators,
operator identities,
computational linear algebra,
associative triple systems,
associative algebras.%
}

\begin{abstract}
We present the first classification of algebraic identities in 3 variables 
for linear operators on associative structures.  
We work in the context of associative triple systems, but since any associative algebra 
with product $xy$ becomes an associative triple system with product $xyz$, 
our results apply to associative algebras as well.  
This is the first time that Rota's classification problem for linear operators 
has been extended to algebras with an $n$-ary operation for $n \ge 3$.  
Our work is an application of computational linear algebra to the classification problem 
for linear operators.
We begin with a generic operator identity with indeterminate coefficients.  
From this we use operadic partial compositions to derive a large sparse matrix whose nonzero entries 
are the indeterminates.  
We follow the rank principle which states that significant operator identities correspond to coefficients 
which produce submaximal rank of the matrix.  
For operator identities of multiplicity 1 (each term contains the operator once)
we obtain 6 families with 1 parameter, and 1 isolated solution.
For multiplicity 2, we obtain 6 families with 2 parameters, 27 families with 1 parameter, 
and 9 isolated solutions.  
\end{abstract}

\maketitle

{\footnotesize\tableofcontents}


\section{Introduction}


\subsection{Background}

Gian-Carlo Rota \cite{Rota1995} introduced the somewhat ill-defined problem of classifying
the algebraic identities that can be satisfied by linear operators on associative algebras.
He said: 
``I have posed the problem of finding all possible algebraic identities that can be 
satisfied by a linear operator on an algebra.
Simple computations show that the possibilities are very few, and the problem of classifying
all such identities is very probably completely solvable.''
Some of the most familiar operator identities are the derivation identity, 
the Rota-Baxter identity, the averaging identities, and the Nijenhuis identity;
furthermore, these identities give rise in a natural way to important algebraic structures,
such as diassociative algebras, dendriform algebras, averaging algebras, and N-dendriform algebras;
see \cite{Aguiar,GaoZhang2018,Guo2012,LG2012,PeiGuo2015}.
See the author and Elgendy \cite{B2025, BE2022} for a list of well-known operator identities, 
and some identities which were previously unknown.

Unfortunately, Rota did not provide details of his computations, 
and subsequent work on this problem has revealed different approaches to a possible solution.
One approach is through the theory of Gr\"obner-Shirshov (GS) bases for operated algebras.
A first step in this direction was Guo's work on operated semigroups \cite{Guo2009}.
The work of Guo et al.~\cite{GSZ2011,GSZ2013} applied GS bases to the classification
of operator identities of differential type and of Rota-Baxter type.
The work of the author and Elgendy \cite{BE2022} introduced another approach in which
one starts with a generic operator identity with indeterminate coefficients,
and then uses operadic partial compositions to derive the matrix of consequences
(a large sparse matrix whose nonzero entries are the indeterminate coefficients).
That paper also introduced the rank principle, which states that significant identities correspond 
to coefficients which produce submaximal rank of the matrix of consequences.
It is this latter approach which we will follow in this paper.
  
Rota's problem may also be generalized in various ways, providing new operator identities 
that deserve further study.
In particular, this problem has been considered in the setting of Lie algebras; 
see Zhang et al.~\cite{ZGG2023,ZGWP2024}. 
This introduces the possibility of considering Rota's problem for 
not necessarily associative algebras.

Previous work on Rota's problem has been restricted to
identities in 2 variables for operators on algebras with a binary operation.
The present paper classifies algebraic identities in 3 variables 
for linear operators on associative triple systems.  
Since any associative algebra 
with product $xy$ becomes an associative triple system with product $xyz$, 
our results apply to associative algebras as well.  
This is the first time that Rota's classification problem has been considered 
for linear operators on algebraic structures with an $n$-ary operation for $n \ge 3$.  


\subsection{Structure of this paper}

In \S\ref{opmon} we introduce the notion of operator identity in the general context
of an associative $n$-ary algebra for $n \ge 2$.
Any operator identity is a linear combination of operator monomials.
We provide an algorithm for generating all operator monomials of a given degree
(number of indeterminates in each term) and given multiplicity (number of occurrences 
of the operator in each term).
Recent work of Au and the author \cite{AB2025} on generalized Narayana numbers
provides an exact formula for the number of operator monomials, 
depending on the arity of the associative operation,
the weight (the total number of unary operators and $n$-ary multiplications),
and the multiplicity of the operator.

In \S\ref{pcmc} we recall the notion of partial composition in an algebraic operad,
and explain the construction of the matrix of consequences for an operator identity.

In \S\ref{mult1} we consider operator identities on associative triple systems
which are linear combinations of the 4 ternary operator monomials of weight 2 and multiplicity 1.
In this case, the rank principle produces only a few significant identities,
including the ternary derivation identity.

In \S\ref{mult2} we consider operator identities on associative triple systems
which are linear combinations of the 10 ternary operator monomials of weight 3 and multiplicity 2.
In this case, the rank principle produces a complex set of operator identities:
9 isolated solutions, 27 families with 1 parameter, and 6 families with 2 parameters.  

The linear algebra calculations described in this paper were performed with the
computer algebra system Maple.


\section{Operator monomials}
\label{opmon}

Although the main results in this paper refer to associative triple systems (also known as
ternary associative algebras), with an operation of arity 3, in this section we begin by 
considering the case of general arity $n \ge 2$.

For background on $n$-ary associative algebras, see Carlsson
\cite{Carlsson} which adopts the point of view of classical algebraic structure theory, 
and the work of Gnedbaye \cite{Gnedbaye1,Gnedbaye2} which adopts the more modern point 
of view based on algebraic operads.
For general background on algebraic operads, see Markl et al.~\cite{MSS-book} for applications,
the comprehensive theoretical treatise of Loday and Vallette \cite{LV-book}, 
and the author and Dotsenko \cite{BD-book} which emphasizes the algorithmic aspects.

\begin{definition}
An \emph{associative $n$-ary algebra} (or \emph{associative $n$-tuple system})
is a vector space $A$ over a field $\mathbb{F}$ together with a multilinear product
$p\colon A^n \to A$ satisfying the \emph{$n$-ary associativity identities}
for $0 \le i < j \le n{-}1$:
\begin{align*}
&
p( a_1, \dots, a_i, p( a_{i+1}, \dots, a_{i+n} ), a_{i+n+1}, \dots, a_{2n-1} )
=
\\
&
p( a_1, \dots, a_j, p( a_{j+1}, \dots, a_{j+n} ), a_{j+n+1}, \dots, a_{2n-1} ).
\end{align*}
\end{definition}

Any monomial in an associative $n$-ary algebra has a degree
(number of indeterminates) which is congruent to 1 modulo $n{-}1$.
A simple inductive argument shows that the associativity identities imply
that the value of any monomial is independent of the placement of the operation symbols.

\begin{definition}
Let $A$ be an associative $n$-ary algebra and let $L\colon A \to A$ be a linear operator on 
the underlying vector space of $A$.
By an \emph{$n$-ary operator monomial} we mean a monomial (product of elements of $A$)
into which have been inserted any number of operator symbols,
respecting the $n$-ary nature of the operation in $A$.
The number of indeterminates in the monomial is called its \emph{degree} $d$.
The number of operator symbols in the monomial is called its \emph{multiplicity} $m$.
The total number of operations in the monomial (including both unary and $n$-ary)
is called its \emph{weight}.
\end{definition}

Alternatively, we may define operator monomials recursively as follows:
\begin{itemize}
\item
The single indeterminate $x$ is an operator monomial.
\item
If $M_1, \dots, M_n$ are operator monomials, then so is $p( M_1, \dots, M_n )$.
\item
If $M$ is an operator monomial, then so is $L(M)$.
\end{itemize}
We also impose the condition that in any operator monomial of degree $d$, 
we enumerate the indeterminates $x_1, \dots, x_d$ from left to right,
using the identity permutation of the indices, so that the indices indicate
merely the position of the indeterminate in the monomial.
(In other words, the underlying operad is nonsymmetric.)

To impose a total order on operator monomials, we recall the following concept.

\begin{definition}
A \emph{Dyck word} is a string of parentheses 
--
that is, a word from the alphabet $\{ \, (, \, ) \, \}$
--
which is \emph{balanced} in the sense that
\begin{itemize}
\item
the word contains the same number of left and right parentheses
(thus the length of every Dyck word is even), and
\item
in every initial sequence (starting from the left), 
the number of left parentheses is greater than or equal to the number of right parentheses.
\end{itemize}
A subsequence of the form $(\,)$ in a Dyck word is called a \emph{nesting}.
We define a total order called the \emph{lex order} on Dyck words as follows.
Given two words $v$ and $w$ of the same length, let $i$ be the least index for which $v_i \ne w_i$.
Then we write $v \prec w$ and say that $v$ \emph{precedes} $w$
if and only if $v_i$ is $($ and $w_i$ is $)$.
\end{definition}

\begin{definition}
We recursively define a map $r$ from operator monomials $M$ of degree $d$ and multiplicity $m$
to Dyck words $w$ of length $2(d{+}m)$:
\begin{itemize}
\item
if $M$ is an indeterminate then $r(M) = (\,)$,
\item
if $M_1, \dots, M_n$ are ($n$-ary) operator monomials then 
\[
r( M_1 \cdots M_n ) = r(M_1) \cdots r(M_n),
\]
\item
if $M$ is an operator monomial then $r( L(M) ) = ( r(M) )$.
\end{itemize}
In other words, if $M$ is an operator monomial then $r(M)$ is obtained by
replacing every indeterminate by a nesting and removing every occurrence
of the operator symbol $L$.
It is easy to see that the map $r$ is bijective for arity $n = 2$ but merely
injective for arity $n \ge 3$.
In both cases, we obtain a lex order on operator monomials by defining
$M_1 \prec M_2$ if and only if $r( M_1 ) \prec r( M_2 )$.
\end{definition}

To clarify these definitions, Table \ref{mult123} displays the ternary operator
monomials of degree $d = 3$ (one associative product) and multiplicities $m = 1, 2, 3$ in lex order 
together with the corresponding Dyck words.
We use the generic indeterminate symbol $\ast$, which is justified
by the fact that the index of an indeterminate merely
indicates its position in the monomial.
To make the correspondence between monomials and Dyck words easier to see,
we have not used standard power notation for iterated operators;
thus we write $L(L(\ast))$ not $L^2(\ast)$.

\begin{table}
\begin{align*}
&
\text{multiplicity 1}
\\
&
\begin{array}{rllrll}
  1  &\quad  L({*}{*}{*})  &\quad  (()()())  &\quad
  2  &\quad  L({*}){*}{*}  &\quad  (())()()  \\ 
  3  &\quad  {*}L({*}){*}  &\quad  ()(())()  &\quad
  4  &\quad  {*}{*}L({*})  &\quad  ()()(())  
\end{array}
\\
&
\text{multiplicity 2}
\\
&
\begin{array}{rllrll}
  1  &\quad  L(L({*}{*}{*}))  &\quad  ((()()()))  &\quad 
  2  &\quad  L(L({*}){*}{*})  &\quad  ((())()())  \\ 
  3  &\quad  L(L({*})){*}{*}  &\quad  ((()))()()  &\quad
  4  &\quad  L({*}L({*}){*})  &\quad  (()(())())  \\ 
  5  &\quad  L({*}{*}L({*}))  &\quad  (()()(()))  &\quad
  6  &\quad  L({*})L({*}){*}  &\quad  (())(())()  \\ 
  7  &\quad  L({*}){*}L({*})  &\quad  (())()(())  &\quad
  8  &\quad  {*}L(L({*})){*}  &\quad  ()((()))()  \\ 
  9  &\quad  {*}L({*})L({*})  &\quad  ()(())(())  &\quad
 10  &\quad  {*}{*}L(L({*}))  &\quad  ()()((()))  
\end{array}
\\
&
\text{multiplicity 3}
\\
&
\begin{array}{rllrll}
  1  &\quad  L(L(L({*}{*}{*})))  &\quad  (((()()())))  &\quad
  2  &\quad  L(L(L({*}){*}{*}))  &\quad  (((())()()))  \\ 
  3  &\quad  L(L(L({*})){*}{*})  &\quad  (((()))()())  &\quad
  4  &\quad  L(L(L({*}))){*}{*}  &\quad  (((())))()()  \\ 
  5  &\quad  L(L({*}L({*}){*}))  &\quad  ((()(())()))  &\quad
  6  &\quad  L(L({*}{*}L({*})))  &\quad  ((()()(())))  \\ 
  7  &\quad  L(L({*})L({*}){*})  &\quad  ((())(())())  &\quad
  8  &\quad  L(L({*}){*}L({*}))  &\quad  ((())()(()))  \\ 
  9  &\quad  L(L({*}))L({*}){*}  &\quad  ((()))(())()  &\quad
 10  &\quad  L(L({*})){*}L({*})  &\quad  ((()))()(())  \\ 
 11  &\quad  L({*}L(L({*})){*})  &\quad  (()((()))())  &\quad
 12  &\quad  L({*}L({*})L({*}))  &\quad  (()(())(()))  \\ 
 13  &\quad  L({*}{*}L(L({*})))  &\quad  (()()((())))  &\quad
 14  &\quad  L({*})L(L({*})){*}  &\quad  (())((()))()  \\ 
 15  &\quad  L({*})L({*})L({*})  &\quad  (())(())(())  &\quad
 16  &\quad  L({*}){*}L(L({*}))  &\quad  (())()((()))  \\ 
 17  &\quad  {*}L(L(L({*}))){*}  &\quad  ()(((())))()  &\quad
 18  &\quad  {*}L(L({*}))L({*})  &\quad  ()((()))(())  \\ 
 19  &\quad  {*}L({*})L(L({*}))  &\quad  ()(())((()))  &\quad
 20  &\quad  {*}{*}L(L(L({*})))  &\quad  ()()(((())))  \\[-10pt]
\end{array}
\end{align*}
\caption{Ternary operator monomials of degree 3 and multiplicities 1, 2, 3
in lex order together with corresponding Dyck words}
\label{mult123}
\end{table}

See Figure \ref{monomialalgorithm} for an algorithm in pseudocode (based on Maple)
which generates all $n$-ary operator monomials up to a given degree and multiplicity.
In this algorithm, the symbol $X$ is used as a generic argument symbol, corresponding
to $*$ in the operator monomials and $(\,)$ in the Dyck words.
Each monomial is a list (an ordered set enclosed in brackets) and brackets are also
used without ambiguity to indicate the application of the operator (as parentheses
are used in Dyck words).
For example, we list below the operator monomials and the original 
$X$-lists generated by the algorithm for degree 3 and multiplicity 2;
the order is that produced by the algorithm, which is not the same as that in
Table \ref{mult123}:
\[
\begin{array}{rllrll}
  1  &\quad  [[[X, X, X]]]  &\quad  L(L({*}{*}{*}))  &\quad\quad
  2  &\quad  [[X, X, [X]]]  &\quad  L({*}{*}L({*}))  \\ 
  3  &\quad  [[X, [X], X]]  &\quad  L({*}L({*}){*})  &\quad\quad
  4  &\quad  [[[X], X, X]]  &\quad  L(L({*}){*}{*})  \\ 
  5  &\quad  [X, X, [[X]]]  &\quad  {*}{*}L(L({*}))  &\quad\quad
  6  &\quad  [X, [X], [X]]  &\quad  {*}L({*})L({*})  \\ 
  7  &\quad  [X, [[X]], X]  &\quad  {*}L(L({*})){*}  &\quad\quad
  8  &\quad  [[X], X, [X]]  &\quad  L({*}){*}L({*})  \\ 
  9  &\quad  [[X], [X], X]  &\quad  L({*})L({*}){*}  &\quad\quad
 10  &\quad  [[[X]], X, X]  &\quad  L(L({*})){*}{*}  \\ 
\end{array}
\]
To justify the algorithm, observe that any monomial of multiplicity $q \ge 1$
can be obtained from a monomial of multiplicity $q{-}1$ by enclosing some valid
sublist in brackets, where valid means that the sublist has an appropriate length
to be an $n$-ary associative monomial.

\begin{figure}
\bigskip
\begin{itemize}
\item
$n$ := the arity of the associative operation
\item
\texttt{maxp} := maximum degree of operator monomials to be generated
\item
\texttt{maxq} := maximum multiplicity of operator monomials to be generated
\item
for $p$ from 1 to \texttt{maxp} in steps of $n{-}1$ do
($p$ is the degree of the monomials)
\begin{enumerate}
\item
(multiplicity 0: only monomial is list of $p$ argument symbols $X$)
\item[]
\texttt{monomials}[ $p$, 0 ] := [ [ $X$, \dots, $X$ ] ]
\item
for $q$ to \texttt{maxq} do ($q$ is the multiplicity of the monomials)
\begin{enumerate}
\item
\texttt{monomials}$[p,q]$ := $[\;]$ \; (the empty list)
\item
(loop through all monomials with one less operator symbol)
\item[]
for $m$ in \texttt{monomials}$[ p, q{-}1 ]$ do
\begin{enumerate}
\item
$k$ := length($m$) (the number of items in the list $m$)
\item
(double loop through all submonomials of $m$)
\item[]
for $i$ to $k$ do for $j$ from $i$ to $k$ do
\begin{itemize}
\item
if $j{-}i{+}1 \equiv 1$ modulo $n{-}1$ then
\begin{itemize}
\item
(add operator brackets around valid submonomial)
\item[]
$m'$ := [ $m_1$, \dots, $m_{i-1}$, [ $m_i$, \dots, $m_j$ ], $m_{j+1}$, \dots, $m_k$ ]
\item
append $m'$ to the list \texttt{monomials}$[p,q]$
\end{itemize}
\end{itemize}
\end{enumerate}
\item
eliminate any repetitions from the list \texttt{monomials}$[p,q]$
\end{enumerate}
\end{enumerate}
\end{itemize}
\vspace{-10pt}
\caption{Algorithm for generating $n$-ary operator monomials}
\label{monomialalgorithm}
\end{figure}

\begin{definition}
An \emph{operator polynomial} $R$ (or \emph{operator identity} $R \equiv 0$) 
of degree $d$ and multiplicity $m$ for an associative $n$-ary operation
is any nonzero linear combination of the corresponding operator monomials
with coefficients in $\mathbb{F}$.
Two operator polynomials are \emph{equivalent} if they differ only by a nonzero
scalar multiple.
\end{definition}

To conclude this section, we recall the formula for the number of $n$-ary operator monomials 
of given weight and multiplicity which has been obtained by Au and the author \cite{AB2025},
which provides a generalization of the familiar Narayana numbers.

\begin{proposition}
Let $N_n(w,m)$ be the number of operator monomials of weight $w$ and multiplicity $m$
for an associative $n$-ary operation.
Then
\[
N_n(w,m) =
\frac{1}{w+1}
\binom{w+1}{m+1}
\binom{w+(w-m)(n-2)+1}{m}
\]
\end{proposition}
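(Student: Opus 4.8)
The plan is to count operator monomials (taken modulo the $n$-ary associativity identities of \S\ref{opmon}, which is the equivalence the numbers $N_n(w,m)$ refer to) by a generating function, to derive a functional equation for it from the recursive structure of operator monomials, and then to extract the coefficients by Lagrange--B\"urmann inversion. The first task is to fix a normal form. Using that the value of a product is independent of its bracketing, and flattening all products while merging nested ones, every operator monomial is equivalent to exactly one of: (a) a single indeterminate; (b) $L(M')$ for a unique operator monomial $M'$; or (c) a product $\langle M_1,\dots,M_d\rangle$ of $d=(n-1)k+1$ factors with $k\ge1$, each factor being of type (a) or (b), the sequence $M_1,\dots,M_d$ being uniquely determined. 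These three types are distinguished by the outermost symbol, and the flattening never moves a product node past an $L$, so the decomposition is meaningful. The point requiring care --- and which I expect to be the \emph{main obstacle} --- is the \emph{completeness} of the normal form: that associativity never identifies two distinct normal forms. This is a confluence statement a little beyond the ``simple inductive argument'' of \S\ref{opmon}, essentially the assertion that the free $n$-ary associative algebra with one unary operator has exactly these normal words.

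Granting the normal form, let $\Phi$ be the generating function recording each operator monomial by a variable $x$ marking its degree, a variable $a$ marking its number of $n$-ary products, and a variable $b$ marking its multiplicity; the weight is the sum of the last two exponents, and one checks $\deg M=(n-1)\cdot(\text{number of products in }M)+1$. Let $\Psi=x+b\Phi$, which records precisely the monomials of type (a) or (b). From the normal form a type-(c) monomial with $k$ products contributes $a^k\Psi^{(n-1)k+1}$, so
\[
\Phi=\Psi+\sum_{k\ge1}a^k\Psi^{(n-1)k+1}=\Psi+\frac{a\Psi^{n}}{1-a\Psi^{n-1}}=\frac{\Psi}{1-a\Psi^{n-1}}.
\]
Substituting into $\Psi=x+b\Phi$ and clearing denominators gives $\Psi=x\,\phi(\Psi)$ with $\phi(y)=\dfrac{1-ay^{n-1}}{1-b-ay^{n-1}}$; since $\phi(0)=\dfrac1{1-b}\ne0$, the series $\Psi$ with $\Psi(0)=0$ exists and is unique.

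Now apply Lagrange--B\"urmann: $[x^{d}]\Psi=\dfrac1d[y^{d-1}]\phi(y)^{d}$. Since $a$ enters $\phi$ only through $t:=ay^{n-1}$, putting $d=(n-1)k+1$ forces the power of $a$ to be $k$ and turns the $y$-extraction into a $t$-extraction, $[x^{d}a^{k}]\Psi=\dfrac1d[t^{k}]\bigl(\tfrac{1-t}{1-b-t}\bigr)^{d}$. Writing $(1-b-t)^{-d}=(1-t)^{-d}\sum_{m\ge0}\binom{d+m-1}{m}b^{m}(1-t)^{-m}$ and using $[t^{k}](1-t)^{-m}=\binom{k+m-1}{k}$, we get $[x^{d}a^{k}b^{m}]\Psi=\dfrac1d\binom{d+m-1}{m}\binom{k+m-1}{k}$. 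For $k\ge1$ the identity $b\Phi=\Psi-x$ gives $N_{n}(w,m)=[x^{d}a^{k}b^{m}]\Phi=[x^{d}a^{k}b^{m+1}]\Psi$ with $k=w-m$ and $d=(n-1)k+1$, so
\[
N_{n}(w,m)=\frac1d\binom{d+m}{m+1}\binom{k+m}{k};
\]
for $k=0$ the only monomials are $L^{m}(x)$, giving $N_{n}(m,m)=1$, consistent with the formula.

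It remains to recast this in the stated form. One has $\dfrac1d\binom{d+m}{m+1}=\dfrac1{m+1}\binom{d+m}{m}$, then $\dfrac1{m+1}\binom{k+m}{k}=\dfrac1{w+1}\binom{w+1}{m+1}$ because $k+m=w$, and finally $d+m=(n-1)(w-m)+1+m=w+(w-m)(n-2)+1$; combining these yields $N_{n}(w,m)=\dfrac1{w+1}\binom{w+1}{m+1}\binom{w+(w-m)(n-2)+1}{m}$, as claimed. This last step is routine binomial arithmetic; the substance of the proof is the normal form and the functional equation, and it is the completeness of the normal form that takes the real work.
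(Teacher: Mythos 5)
Your proof is correct, and I verified the key steps: the functional equation $\Psi = x\,\phi(\Psi)$ with $\phi(y)=(1-ay^{n-1})/(1-b-ay^{n-1})$ follows from your decomposition, the Lagrange--B\"urmann extraction gives $[x^d a^k b^m]\Psi = \tfrac1d\binom{d+m-1}{m}\binom{k+m-1}{k}$, and the binomial manipulations at the end do recover the stated form (the result also checks against the table in Example \ref{narayanaexample}, e.g.\ $N_3(4,2)=\tfrac15\binom{7}{3}\binom{4}{2}=42$). Note, however, that this paper does not actually prove the proposition: it is recalled from the cited work of Au and the author \cite{AB2025}, so there is no in-paper argument to compare against; that reference frames the count as a generalization of the Narayana numbers, suggesting a path-counting or cycle-lemma style argument rather than your generating-function route, but your derivation is self-contained and stands on its own. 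The one ingredient you rightly flag --- that the flattened products with factors of type (a) or (b) form a complete and irredundant set of normal forms for the free operated $n$-ary associative structure --- is exactly what the paper itself assumes (its Definition of operator monomials, the algorithm of Figure \ref{monomialalgorithm}, and Table \ref{mult123} all presuppose this basis), so treating it as given puts you on the same footing as the paper; if you wanted to close it, the standard argument is that the rewriting rule collapsing a product of products into a single flattened product is terminating and confluent, with the critical pairs resolved precisely by the $n$-ary associativity identities.
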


\begin{example}
\label{narayanaexample}
For $n = 3$ we obtain the following table:
\[
\begin{array}{l|rrrrrr}
w \backslash m & 0 & 1 & 2 & 3 & 4 & 5 \\
\midrule
0 & 1 \\
1 & 1 & 1 \\
2 & 1 & 4 & 1 \\
3 & 1 & 9 & 10 & 1 \\
4 & 1 & 16 & 42 & 20 & 1 \\
5 & 1 & 25 & 120 & 140 & 35 & 1
\end{array}
\]
The subdiagonal gives the tetrahedral numbers 1, 4, 10, 20, \dots
(compare Table \ref{mult123}).
\end{example}


\section{The matrix of consequences}
\label{pcmc}

Our primary object of study in this paper is the matrix of consequences of a generic operator identity.
To construct this matrix, we need to recall the concept of partial composition from 
the theory of algebraic operads.
We consider only nonsymmetric operads, so that every monomial
of degree $d$ has indeterminates $x_1 \cdots x_d$ with the subscripts given
by the identity permutation.
Equivalently, we may use the generic argument symbol $*$ throughout.

Let $\alpha$ and $\beta$ be operations of arities $m$ and $n$ respectively.
Then the $i$-th \emph{partial composition} of $\beta$ into $\alpha$, 
denoted $\alpha \circ_i \beta$ where $1 \le i \le m$,
is the operation of arity $m+n-1$ obtained by replacing argument $i$ of $\alpha$ 
by an instance of operation $\beta$:
\[
\alpha \circ_i \beta
=
\alpha( x_1, \dots, x_{i-1}, \beta( x_i, \dots, x_{i+n-1} ), x_{i+n}, \dots, x_{m+n-1} ).
\]
Partial composition is bilinear, and hence extends to linear
combinations of operations and to operator identities.

We consider only two operations, the ternary associative multiplication $p$
and the unary operator $L$.
Any operator monomial or identity can be considered as an operation in its own right.
Given an operator identity $R$ of degree $d$ and multiplicity $m$, we may form 
the following partial compositions:
\[
R \circ_i p \;\; (1 \le i \le d),
\qquad
p \circ_i R \;\; (1 \le i \le 3),
\qquad
R \circ_i L \;\; (1 \le i \le d),
\qquad
L \circ R.
\]
In the last case, the partial composition symbol has no subscript since there is only
one possibility.
The first two cases increase the degree and the last two cases increase the multiplicity.
For example, if $R = L^2({\ast}{\ast}{\ast})$ then
\begin{align*}
&
R \circ_i p = L^2({\ast}{\ast}{\ast}{\ast}{\ast}) 
\quad 
\text{for $1 \le i \le 3$ by ternary associativity},
\\
&
p \circ_1 R = L^2({\ast}{\ast}{\ast}){\ast}{\ast}, \qquad
p \circ_2 R = {\ast}L^2({\ast}{\ast}{\ast}){\ast}, \qquad
p \circ_3 R = {\ast}{\ast}L^2({\ast}{\ast}{\ast}),
\\
&
R \circ_1 L = L^2(L({\ast}){\ast}{\ast}), \qquad
R \circ_2 L = L^2({\ast}L({\ast}){\ast}), \qquad
R \circ_3 L = L^2({\ast}{\ast}L({\ast})),
\\
&
L \circ R = L^3({\ast}{\ast}{\ast}).
\end{align*}
Partial composition with $p$ increases the degree by 2 
but does not increase the multiplicity,
whereas partial composition with $L$ increases the multiplicity by 1 
but does not increase the degree.

To understand how an operator identity $R$ interacts with the two operations $p$ and $L$,
we need to apply partial composition with both operations, either with $p$ and then $L$,
or with $L$ and then $p$.
In both cases, the degree will increase by 2 and multiplicity by 1.
This gives the following possibilities, where the ranges on the subscripts are evident
and are not specified; some of these may be equal:
\begin{equation}
\label{pclist}
\left\{ \quad
\begin{array}{llll}
( R \circ_i p ) \circ_j L, &\quad
L \circ ( R \circ_i p ), &\quad
( p \circ_i R ) \circ_j L, &\quad
L \circ ( p \circ_i R ),
\\[6pt]
( R \circ_i L ) \circ_j p, &\quad
p \circ_j ( R \circ_i L ), &\quad
( L \circ R ) \circ_i p, &\quad
p \circ_i ( L \circ R ).
\end{array}
\right.
\end{equation}
We may now state the most important definition in this paper.

\begin{definition}
Let $R$ be an operator identity of degree $d$ and multiplicity $m$.
The operator identities of degree $d+2$ and multiplicity $m+1$
in display \eqref{pclist} are called the \emph{consequences} of $R$.
Let $k$ denote the number of distinct consequences of $R$,
and let $S_1, \dots, S_k$ be these consequences
(their order is not significant).
Let $m_1, \dots, m_\ell$ be the lex-ordered list of the operator monomials of
degree $d+2$ and multiplicity $m+1$.
The \emph{matrix of consequences} of $R$ is the $k \times \ell$ matrix in which
the $(i,j)$-entry is the coefficient of $m_j$ in $S_i$
for $1 \le i \le k$ and $1 \le j \le \ell$.
\end{definition}


\section{Operator identities of degree 3 and multiplicity 1}
\label{mult1}

Previous work by the author and Elgendy \cite{BE2022} used determinantal ideals and Gr\"obner bases 
to classify all operator identities for a binary operation of degree 2 and multiplicities 1 and 2.  
In the present case of a ternary operation, computational experiments showed that those methods, 
relying heavily on commutative algebra, were not practical.  
However, all the operator identities from that previous paper have coefficients in the set 
$\{ 0, \pm 1, \pm 2 \}$, 
excluding the two 1-parameter families.
Therefore in the present paper we make this assumption on the coefficients and 
perform a direct search (using a Cartesian product loop) over all operator identities 
with those coefficients.  We may assume without loss of generality that 
the leading coefficient is positive (1 or 2), and
that the coefficients are relatively prime.

In this section we consider multiplicity 1, for which the computations are sufficiently small 
that we can present all the details.  
In the next section we consider multiplicity 2 where the some of the computations are too large 
to present explicitly.

For degree 3 and multiplicity 1, the four operator monomials are displayed in Table \ref{mult123}.  
Let $R$ be the generic linear combination of these operator monomials:
\[
R =
a \, L({*}{*}{*}) +
b \, L({*}){*}{*} +
c \, {*}L({*}){*} +
d \, {*}{*}L({*}),
\]
with indeterminate coefficients $a, b, c, d$.
The consequences of $R$ have degree 5 and multiplicity 2 and hence are linear combinations 
of the monomials in Table \ref{deg5mult2}.

\begin{table}
$
\begin{array}{rlrlrl}
  1  &\quad  L(L({*}{*}{*}{*}{*}))  &\qquad  
  2  &\quad  L(L({*}{*}{*}){*}{*})  &\qquad  
  3  &\quad  L(L({*}{*}{*})){*}{*}  \\
  4  &\quad  L(L({*}){*}{*}{*}{*})  &\qquad  
  5  &\quad  L(L({*}){*}{*}){*}{*}  &\qquad  
  6  &\quad  L(L({*})){*}{*}{*}{*}  \\ 
  7  &\quad  L({*}L({*}{*}{*}){*})  &\qquad  
  8  &\quad  L({*}L({*}){*}{*}{*})  &\qquad   
  9  &\quad  L({*}L({*}){*}){*}{*}  \\
 10  &\quad  L({*}{*}L({*}{*}{*}))  &\qquad   
 11  &\quad  L({*}{*}L({*}){*}{*})  &\qquad  
 12  &\quad  L({*}{*}L({*})){*}{*}  \\ 
 13  &\quad  L({*}{*}{*}L({*}){*})  &\qquad  
 14  &\quad  L({*}{*}{*}{*}L({*}))  &\qquad   
 15  &\quad  L({*}{*}{*})L({*}){*}  \\
 16  &\quad  L({*}{*}{*}){*}L({*})  &\qquad  
 17  &\quad  L({*})L({*}{*}{*}){*}  &\qquad  
 18  &\quad  L({*})L({*}){*}{*}{*}  \\ 
 19  &\quad  L({*}){*}L({*}{*}{*})  &\qquad  
 20  &\quad  L({*}){*}L({*}){*}{*}  &\qquad   
 21  &\quad  L({*}){*}{*}L({*}){*}  \\
 22  &\quad  L({*}){*}{*}{*}L({*})  &\qquad   
 23  &\quad  {*}L(L({*}{*}{*})){*}  &\qquad  
 24  &\quad  {*}L(L({*}){*}{*}){*}  \\ 
 25  &\quad  {*}L(L({*})){*}{*}{*}  &\qquad  
 26  &\quad  {*}L({*}L({*}){*}){*}  &\qquad   
 27  &\quad  {*}L({*}{*}L({*})){*}  \\
 28  &\quad  {*}L({*}{*}{*})L({*})  &\qquad   
 29  &\quad  {*}L({*})L({*}{*}{*})  &\qquad  
 30  &\quad  {*}L({*})L({*}){*}{*}  \\ 
 31  &\quad  {*}L({*}){*}L({*}){*}  &\qquad  
 32  &\quad  {*}L({*}){*}{*}L({*})  &\qquad   
 33  &\quad  {*}{*}L(L({*}{*}{*}))  \\
 34  &\quad  {*}{*}L(L({*}){*}{*})  &\qquad   
 35  &\quad  {*}{*}L(L({*})){*}{*}  &\qquad  
 36  &\quad  {*}{*}L({*}L({*}){*})  \\ 
 37  &\quad  {*}{*}L({*}{*}L({*}))  &\qquad  
 38  &\quad  {*}{*}L({*})L({*}){*}  &\qquad   
 39  &\quad  {*}{*}L({*}){*}L({*})  \\
 40  &\quad  {*}{*}{*}L(L({*})){*}  &\qquad   
 41  &\quad  {*}{*}{*}L({*})L({*})  &\qquad  
 42  &\quad  {*}{*}{*}{*}L(L({*}))  
\end{array}
$
\bigskip
\caption{Ternary operator monomials of degree 5, multiplicity 2}
\label{deg5mult2}
\end{table}

Straightforward computations show that there are $k = 42$ distinct consequences of $R$.
Coincidentally, there are also $\ell = 42$ operator monomials of degree 5 and multiplicity 2.
The resulting $42 \times 42$ matrix of consequences is displayed in Figure \ref{matrix31}.
The columns correspond in lex order to the monomials in Table \ref{deg5mult2}.  
The consequences are calculated using partial compositions as described in the previous section.  
For example, the first row of the matrix of consequences corresponds to
\[
( R \circ_1 p ) \circ_1 L
=
a \, L(L({*}){*}{*}{*}{*}) +
b \, L(L({*}){*}{*}){*}{*} +
c \, L({*}){*}{*}L({*}){*} +
d \, L({*}){*}{*}{*}L({*}).
\]
These monomials have indices 4, 5, 21, 22 in Table \ref{deg5mult2}, and these are the columns
in which the indeterminates appear in Figure \ref{matrix31}.

\begin{figure}[h]
\[
\tiny
\left[

$
\bigskip
\caption{Submaximal rank solutions for degree 3 and multiplicity 1}
\label{submax1}
\end{table}

We calculate that the matrix of consequences has rank 36 over the function field $\mathbb{F}(a,b,c,d)$.  
So our goal is to find all values of the indeterminates $a,b,c,d$ which produce a matrix of rank $\le 35$.  Altogether there are 312 values of the coefficients in the Cartesian product loop, 
but only 41 ($\approx 13.14\%$) of these produce submaximal rank.  
These solutions are displayed in Table \ref{submax1}.
The set of possible submaximal ranks is $\{ 26, 27, 31, 32, 34 \}$.
Inspection of Table \ref{submax1} reveals six 1-parameter families
and one isolated solution:
\begin{enumerate}
\item
Solutions 1--5, 15, 16 combine to form the family $[ 1, 0, 0, d ]$ with generic rank 32
and special rank 27 for $d = 0, -1$.
\item
Solutions 1, 6, 7, 13, 14, 17, 18 form the 1-parameter family $[ 1, b, 0, 0 ]$ with generic rank 32 
and special rank 27 for $b = 0, -1$.
\item
Solutions 8--12, 19, 20 form the family $[ 1, -1, c, -1 ]$ with generic rank 32
and special rank 27 for $c = 1, -1$.
Note that $c = -1$ corresponds to the ternary derivation identity,
\[
L({\ast}{\ast}{\ast})
=
L({\ast}){\ast}{\ast} +
{\ast}L({\ast}){\ast} +
{\ast}{\ast}L({\ast}).
\]
\item
Solutions 21--25, 30, 31 form the family $[0,1,0,d]$ with generic rank 34
and special ranks 26 for $d = 0$ and 31 for $d = -1$.
\item
Solutions 21, 26--29, 32, 33 form the family $[ 0, 1, c, 0 ]$ with generic rank 31
and special rank 26 for $c = 0$.
\item
Solutions 34--40 form the family $[ 0, 0, 1, d ]$ with generic rank 31 and special 
rank 26 for $d = 0$.
\item
Solution 41, namely $[0,0,0,1]$ with rank 26, is an isolated point.
\end{enumerate}
These computations were done over the field $\mathbb{Q}$ of rational numbers.
The generic ranks of the 1-parameter families were verified by computing
the rank of the matrix of consequences over the appropriate field of rational functions.
Thus, for example, for family (3) we set $a = 1$, $b = -1$, $c$ free, $d = -1$ 
in the matrix of consequences and compute its rank over $\mathbb{Q}(c)$.
Summarizing, we have the following result.

\begin{theorem}
\label{mult1theorem}
Let $R$ be an operator identity of degree 3 and multiplicity 1 on an associative triple system.  
Assume that the coefficient vector $v$ of $R$ in $\mathbb{F}^4$ satisfies these conditions:
\begin{itemize}
\item
the components of $v$ belong to the set $\{0,\pm 1, \pm 2 \}$, and
\item
$v$ produces submaximal rank in the matrix of consequences.
\end{itemize}
Then $R$ is equivalent to one of the following identities:
\begin{align*}
&
L({\ast}{\ast}{\ast}) + d \, {\ast}{\ast}L({\ast}) \equiv 0,
\qquad
L({\ast}{\ast}{\ast}) + b \, L({\ast}){\ast}{\ast} \equiv 0,
\\
&
L({\ast}{\ast}{\ast}) - L({\ast}){\ast}{\ast} + c \, {\ast}L({\ast}){\ast} - {\ast}{\ast}L({\ast}) \equiv 0,
\\
&
L({\ast}){\ast}{\ast} + d \, {\ast}{\ast}L({\ast}) \equiv 0,
\qquad
L({\ast}){\ast}{\ast} + c \, {\ast}L({\ast}){\ast} \equiv 0,
\\
&
{\ast}L({\ast}){\ast} + d \, {\ast}{\ast}L({\ast}) \equiv 0,
\qquad
{\ast}{\ast}L({\ast}) \equiv 0.
\end{align*}
\end{theorem}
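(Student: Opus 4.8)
The proof is a finite computation organized around the matrix of consequences $M = M(a,b,c,d)$ of the generic identity $R = a\,L({*}{*}{*}) + b\,L({*}){*}{*} + c\,{*}L({*}){*} + d\,{*}{*}L({*})$, which was constructed in Figure~\ref{matrix31}: its $42$ rows are the distinct consequences listed in display~\eqref{pclist}, and its $42$ columns correspond in lex order to the operator monomials of Table~\ref{deg5mult2}. The first step is to pin down the generic rank. One computes, by row reduction over the rational function field $\mathbb{F}(a,b,c,d)$, that $\operatorname{rank} M = 36$; equivalently, some $36 \times 36$ minor is a nonzero polynomial in $a,b,c,d$ while every $37 \times 37$ minor vanishes identically. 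By the rank principle, the significant identities in our family are therefore exactly those whose coefficient vector $v$ makes the specialized matrix $M(v)$ (with entries in $\{0,\pm 1,\pm 2\}$) have rank at most $35$.

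The second step is the search itself. The hypotheses restrict $v = (a,b,c,d)$ to components in $\{0,\pm 1,\pm 2\}$ with positive leading coefficient and relatively prime entries, which gives $312$ vectors, one per equivalence class of operator polynomials with such coefficients. For each of these I would specialize $M$ and compute $\operatorname{rank} M(v)$ over $\mathbb{Q}$; exactly $41$ of the $312$ produce submaximal rank, and these are recorded with their ranks in Table~\ref{submax1}, the ranks that occur forming the set $\{26,27,31,32,34\}$.

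The third step is to recognize the structure hidden in Table~\ref{submax1}. By direct inspection one checks that every one of the $41$ vectors lies on one of the seven affine sets $[1,0,0,d]$, $[1,b,0,0]$, $[1,-1,c,-1]$, $[0,1,0,d]$, $[0,1,c,0]$, $[0,0,1,d]$, or equals the isolated point $[0,0,0,1]$, and that these seven account for all $41$ solutions with nothing left over. For each of the six one-parameter lines, the generic rank along the line is then confirmed by a single additional computation over the appropriate one-variable rational function field: for example, setting $a=1$, $b=-1$, $d=-1$ with $c$ free gives rank $32$ over $\mathbb{Q}(c)$ for the line $[1,-1,c,-1]$, with the drop to rank $27$ occurring exactly at $c = \pm 1$, so that the whole line consists of significant identities. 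Rescaling each line so that its leading coordinate equals $1$ produces the seven identities displayed in the statement, with $c = -1$ on the third line identified as the ternary derivation identity. This completes the proof.

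The only genuinely non-routine point is the third step: one must verify that the $41$ special solutions partition cleanly into the seven stated classes, with no sporadic solution unaccounted for, and that the generic ranks attributed to the six families are the true values rather than artifacts of the restricted coefficient search. Once this bookkeeping is carried out, the rest of the argument is mechanical linear algebra over $\mathbb{Q}$ and over rational function fields.
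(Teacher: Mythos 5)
Your proposal is correct and follows essentially the same route as the paper: compute the generic rank 36 of the $42\times 42$ matrix of consequences, run the exhaustive search over the 312 admissible coefficient vectors to find the 41 submaximal solutions of Table \ref{submax1}, group them into six one-parameter families plus one isolated point, and confirm the generic ranks of the families over the relevant rational function fields. The only small inaccuracy is the word ``partition'' at the end: the seven classes cover the 41 solutions but overlap (e.g.\ $[1,0,0,0]$ lies on both the line $[1,0,0,d]$ and the line $[1,b,0,0]$), which is harmless for the conclusion.
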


\begin{conjecture}
Let $R$ be an operator identity of degree 3 and multiplicity 1 on an associative triple system.  
Assume that the coefficient vector $v$ of $R$ in $\mathbb{F}^4$ produces submaximal rank in 
the matrix of consequences (with no restrictions on the components of $v$).
Then $R$ is equivalent to one of the identities of Theorem \ref{mult1theorem}.
\end{conjecture}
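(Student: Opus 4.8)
The plan is to reduce the statement to a finite computation over the admissible coefficient vectors, so that the proof amounts to carrying out, and checking, the construction of \S\ref{pcmc} in this case. First I would build the matrix of consequences of the generic identity $R = a\,L({\ast}{\ast}{\ast}) + b\,L({\ast}){\ast}{\ast} + c\,{\ast}L({\ast}){\ast} + d\,{\ast}{\ast}L({\ast})$: form all eight kinds of double partial composition in display \eqref{pclist}, discard duplicates to obtain the $k = 42$ distinct consequences, and record their coefficients against the $\ell = 42$ lex-ordered monomials of Table \ref{deg5mult2}; this is the $42 \times 42$ matrix of Figure \ref{matrix31}, whose nonzero entries are among $a,b,c,d$. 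I would then compute its rank over the rational function field $\mathbb{F}(a,b,c,d)$, obtaining the generic rank $36$, so that ``submaximal rank'' means rank at most $35$.

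Next I would justify the finite search. Multiplying $R$ by a nonzero scalar multiplies every row of the matrix of consequences by that scalar, so it changes neither the rank nor the equivalence class of $R$; in particular $v$ and $-v$ may be identified, and it suffices to run a Cartesian-product loop over the $312$ vectors in $\{0,\pm1,\pm2\}^4$ with positive leading entry (and, if one wishes, relatively prime entries). For each such $v$ I would substitute into the matrix of consequences, compute the rank over $\mathbb{Q}$, and retain the vectors of rank at most $35$; these are exactly the $41$ entries of Table \ref{submax1}, with ranks in $\{26,27,31,32,34\}$.

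Finally I would collate Table \ref{submax1}. By inspection, the $41$ solutions are, up to scaling, precisely the points with entries in $\{0,\pm1,\pm2\}$ lying on one of the six lines $[1,0,0,d]$, $[1,b,0,0]$, $[1,-1,c,-1]$, $[0,1,0,d]$, $[0,1,c,0]$, $[0,0,1,d]$, together with the isolated point $[0,0,0,1]$, and one checks that these seven groups exhaust the table with nothing left over. To confirm that each line really is a one-parameter family of significant identities, and not an accident at small integer values, I would substitute the parametrization into the matrix of consequences and compute the rank over the appropriate field of rational functions; for instance $a = 1$, $b = -1$, $c = t$, $d = -1$ gives generic rank $32 < 36$ for the third line, which contains the ternary derivation identity at $c = -1$. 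Reading each of the seven coefficient shapes back as an identity $R \equiv 0$ then yields exactly the list in the statement.

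There is no deep obstacle here; the content is the correctness and completeness of the search. The points most prone to error are the verification that the generic rank of the $42 \times 42$ matrix is indeed $36$ (a miscount would shift the meaning of ``submaximal''), the passage from the $625$ vectors in $\{0,\pm1,\pm2\}^4$ to the $312$ normalized representatives without losing an equivalence class, and the claim that the $41$ tabulated solutions partition cleanly into the six lines and one point of the statement with nothing left over; this last step, which comes down to careful inspection of Table \ref{submax1}, is the one I would regard as the main obstacle. A secondary point is field-dependence: the search is run over $\mathbb{Q}$, so the conclusion is immediate in characteristic $0$, while in positive characteristic some of the ranks could conceivably drop and would need rechecking.
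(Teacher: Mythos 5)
The statement you are asked to prove is stated in the paper as a \emph{conjecture}, precisely because its hypothesis places \emph{no} restriction on the components of the coefficient vector $v \in \mathbb{F}^4$. Your argument, however, is a finite search: you loop over the $312$ normalized vectors in $\{0,\pm1,\pm2\}^4$, compute ranks, and collate the $41$ submaximal solutions into six lines and one isolated point. That is exactly the proof of Theorem \ref{mult1theorem} (the restricted version), and as such it is correct and matches the paper's computation. But it does not touch the conjecture: a coefficient vector such as $(1,3,0,0)$ or $(1,-1,\pi,-1)$ or indeed any point of $\mathbb{F}^4$ outside your finite grid is simply never examined, so you have no control over whether some submaximal-rank solution lies off the seven families you list. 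Your closing remark that ``the content is the correctness and completeness of the search'' concedes the point: the search is complete only relative to the ad hoc coefficient set $\{0,\pm1,\pm2\}$, and completeness over that set says nothing about the rest of $\mathbb{F}^4$.

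To actually prove the conjecture you would need a genuinely different kind of argument: the locus of $(a,b,c,d)$ for which the $42\times 42$ matrix of consequences has rank $\le 35$ is the zero set of the ideal generated by all $36\times 36$ minors, and one would have to show that this determinantal variety is exactly the union of the six lines and the isolated point (up to projective equivalence), e.g.\ by computing a Gr\"obner basis of that ideal or by a primary decomposition. The paper explicitly notes that this determinantal-ideal approach was used in the binary case but was found computationally impractical in the ternary setting, which is why the unrestricted statement is left as a conjecture. Your rank computations over $\mathbb{Q}(d)$, $\mathbb{Q}(b)$, $\mathbb{Q}(c)$ along the parametrized lines verify that each listed family genuinely consists of submaximal solutions (the easy inclusion), but the hard direction --- that nothing else does --- remains unaddressed.
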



\section{Operator identities of degree 3 and multiplicity 2}
\label{mult2}

In this section we consider ternary operator identities of degree 3 and multiplicity 2.
The generic such identity is the linear combination with indeterminate coefficients
of the 10 monomials of multiplicity 2 in Table \ref{mult123}:
\begin{align*}
R &=
a_1    L^2({*}{*}{*}) +     
a_2    L(L({*}){*}{*}) +   
a_3    L^2({*}){*}{*} +    
a_4    L({*}L({*}){*}) +   
a_5    L({*}{*}L({*})) 
\\
&+  
a_6    L({*})L({*}){*} +  
a_7    L({*}){*}L({*}) +    
a_8    {*}L^2({*}){*} +   
a_9    {*}L({*})L({*}) +    
a_{10} {*}{*}L^2({*}).    
\end{align*}
As in the case of multiplicity 1, we consider only coefficients from the set 
$\{ 0, \pm 1, \pm 2 \}$, and assume without loss of generality that 
the leading coefficient is positive and that the coefficients are relatively prime.
In the case of multiplicity 2, the Cartesian product loop over all 10 coefficients
must consider 4882812 cases.
Remarkably, we will see that only 387 ($\approx 0.0079\%$) of these coefficient vectors 
produce submaximal rank of the matrix of consequences.
This is strong evidence that the rank principle is a powerful tool for isolating 
significant operator identities.

The consequences of $R$ have degree 5 and multiplicity 3;
according to Example \ref{narayanaexample} there are 140 such monomials
and hence they are not displayed.
As in the case of multiplicity 1, there are 42 distinct consequences of the identity $R$.
(The same sequences of partial compositions provide a non-redundant set of consequences.)
Therefore the matrix of consequences for multiplicity 3 has size $42 \times 140$ and hence
is not displayed; as before the columns correspond in lex order to the operator monomials of degree 5
and multiplicity 3 and the rows correspond to the consequences in any convenient order.
For example, the first row of the matrix of consequences corresponds to
\begin{align*}
&
( R \circ_1 p ) \circ_1 L
=
a_1    L^2(L({*}){*}{*}{*}{*}) +     
a_2    L(L(L({*}){*}{*}){*}{*}) +   
a_3    L^2(L({*}){*}{*}){*}{*} 
\\
&
+    
a_4    L(L({*}){*}{*}L({*}){*}) +   
a_5    L(L({*}){*}{*}{*}L({*})) +  
a_6    L(L({*}){*}{*})L({*}){*} 
\\
&
+ 
a_7    L(L({*}){*}{*}){*}L({*}) +    
a_8    L({*}){*}{*}L^2({*}){*} +   
a_9    L({*}){*}{*}L({*})L({*}) +    
a_{10} L({*}){*}{*}{*}L^2({*}).    
\end{align*}
We calculate that the matrix of consequences has full rank 42 over the function field 
$\mathbb{Q}(a_1,\dots,a_{10})$.  
So our goal is to find all values of the indeterminates $a_1,\dots,a_{10}$ which produce 
rank $\le 41$ of the matrix of consequences.  
As already mentioned, there are only 387 such coefficient vectors;
these are listed in the Appendix.

These initial computations were done over the field with $p = 1009$ elements in order to 
save computing time.
To justify this we use the following fact.

\begin{lemma}
Let $A$ be an integer matrix and let $p$ be a prime number.
Let $r$ be the rank of $A$ over $\mathbb{Q}$ and let $s$ be the rank of $A$ over $\mathbb{F}_p$.
Then $s \le r$.
\end{lemma}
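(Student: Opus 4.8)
The plan is to argue via minors, using the standard fact that a matrix over a field has rank less than $t$ if and only if all of its $t \times t$ minors vanish. First I would observe that since $A$ has rank $r$ over $\mathbb{Q}$, every $(r+1) \times (r+1)$ submatrix of $A$ is singular over $\mathbb{Q}$, so its determinant is $0$. But that determinant is a fixed polynomial with integer coefficients evaluated at the entries of $A$ (which are integers), hence it is itself an integer; since it vanishes in $\mathbb{Q}$ it is the integer $0$.

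Next I would pass to $\mathbb{F}_p$. Let $\bar{A}$ denote the reduction of $A$ modulo $p$. Reduction modulo $p$ is a ring homomorphism $\mathbb{Z} \to \mathbb{F}_p$, and the determinant of a submatrix is the same polynomial expression in the entries regardless of the ground ring; therefore the determinant of any $(r+1) \times (r+1)$ submatrix of $\bar{A}$ equals the reduction modulo $p$ of the determinant of the corresponding submatrix of $A$, which we have just shown is $0$. Hence every $(r+1) \times (r+1)$ minor of $\bar{A}$ vanishes in $\mathbb{F}_p$, so $\operatorname{rank}_{\mathbb{F}_p}(\bar{A}) \le r$; that is, $s \le r$.

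There is essentially no obstacle here; the only point needing care is the compatibility of the determinant with the ring homomorphism $\mathbb{Z}\to\mathbb{F}_p$. (The minor formulation also sidesteps the subtlety that would arise in an alternative proof via column dependences: there one takes a rational linear relation among $r+1$ columns, clears denominators, and divides by the gcd of the coefficients, and one must then check that the resulting primitive integer relation remains nontrivial modulo $p$.) For completeness I would also record the contrapositive reading that justifies the use of the lemma in the paper: if $\bar{A}$ has full rank over $\mathbb{F}_p$ then $A$ has full rank over $\mathbb{Q}$, so a computation over $\mathbb{F}_{1009}$ returning rank $42$ certifies rank $42$ over $\mathbb{Q}$, and only those coefficient vectors producing submaximal rank modulo $p$ need to be re-examined over $\mathbb{Q}$.
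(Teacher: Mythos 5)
Your proof is correct and is essentially the paper's own argument spelled out in detail: the paper's one-line proof ("the rank is the maximal size of a nonzero minor") is exactly the minor-based reasoning you give, relying on the fact that integer minors vanishing over $\mathbb{Q}$ must also vanish modulo $p$. No gap; your version just makes the compatibility of determinants with reduction mod $p$ explicit.
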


\begin{proof}
The rank is the maximal size of a nonzero minor.%
\end{proof}

It follows that if the modular computations show that a given coefficient vector produces
maximal rank, then the rational rank must also be maximal.
However, there is a small chance that the modular rank is submaximal but the rational rank
is maximal.
We need therefore to calculate the ranks of the 387 solutions using rational arithmetic.
We found that the modular and rational ranks agree in every case.
The set of possible submaximal ranks is $\{ 30, 34, 35, 36, 40, 41 \}$.

Inspection of the Appendix leads to the conclusion that the 387 solutions 
with submaximal rank may be classified into 
9 isolated points, 
27 families with 1 parameter, and
6 families with 2 parameters.
These solutions are given in Table \ref{allsolutions},
together with the corresponding ranks of the matrix of consequences.
For the families with parameters, these are the generic ranks 
over the corresponding field of rational functions;
for some special values of the parameters, the rank will be lower (see the Appendix).
Summarizing, we have the following result.

\begin{table}
$
\begin{array}{rrrrrrrrrr|r}
a_1 & a_2 & a_3 & a_4 & a_5 & a_6 & a_7 & a_8 & a_9 & a_{10} & \text{rank} \\
\midrule
\multicolumn{10}{l|}{\text{Isolated points}} &   \\
  1 & -2 &  1 &  2 & -2 & -2 &  2 &  1 & -2 &  1  &  40 \\ 
  1 & -2 &  1 & -2 & -2 &  2 &  2 &  1 &  2 &  1  &  40 \\ 
  1 & -2 &  1 &  0 & -2 &  0 &  2 &  0 &  0 &  1  &  41 \\ 
  0 &  1 &  0 &  0 & -1 &  0 &  0 &  0 &  0 &  0  &  41 \\ 
  0 &  1 &  0 &  0 &  1 &  0 & -1 &  0 &  0 &  0  &  41 \\ 
  0 &  0 &  1 &  0 &  0 &  0 &  1 &  0 &  0 &  1  &  41 \\ 
  0 &  0 &  0 &  0 &  0 &  1 &  1 &  1 &  1 &  0  &  40 \\ 
  0 &  0 &  0 &  0 &  0 &  1 & -1 & -1 &  1 &  0  &  40 \\ 
  0 &  0 &  0 &  0 &  0 &  0 &  0 &  0 &  0 &  1  &  30 \\ 
\midrule
\multicolumn{10}{l|}{\text{One-parameter families}} &   \\
  1 & a_2 &  0 &  0 & {-}1 &   0 & {-}a_2 &  0 &   0 &     0  & 41  \\ 
  1 & {-}1 &  0 &  0 & a_5 &   0 & {-}a_5 &  0 &   0 &     0  & 41  \\ 
  1 &  0 & {-}1 &  0 &  0 &   0 &   0 & a_8 &   0 &    {-}1  & 36  \\ 
  1 &  0 &  0 & a_4 &  0 &   0 &   0 &  0 &   0 &     0  & 41  \\ 
  0 &  1 & a_3 &  0 &  0 &   0 &   0 &  0 &   0 &     0  & 36  \\ 
  0 &  1 & {-}1 & a_4 &  0 & {-}a_4 &   0 &  0 &   0 &     0  & 40  \\ 
  0 &  1 & {-}1 &  0 & a_5 &   0 & {-}a_5 &  0 &   0 &     0  & 41  \\ 
  0 &  1 &  0 & a_4 &  0 & {-}a_4 &   0 &  0 &   0 &     0  &  41 \\ 
  0 &  1 &  0 & a_4 &  0 &   0 &   0 &  0 &   0 &     0  &  41 \\ 
  0 &  1 &  0 & a_4 &  0 &   0 &  {-}1 &  0 & {-}a_4 &     0  & 41  \\ 
  0 &  1 &  0 &  0 &  0 &   0 &  a_7 &  0 &   0 &     0  & 41  \\ 
  0 &  1 &  0 &  0 & a_5 &   0 &  {-}1 &  0 &   0 &   {-}a_5  & 41  \\ 
  0 &  0 &  1 &  0 &  0 &   0 &  a_7 &  0 &   0 &     0  &  41 \\ 
  0 &  0 &  1 &  0 &  0 &   0 &  a_7 &  0 &   0 & {-}a_7{-}1  & 40  \\ 
  0 &  0 &  0 &  1 & a_5 &  {-}1 & {-}a_5 &  0 &   0 &     0  & 41  \\ 
  0 &  0 &  0 &  1 & a_5 &   0 &   0 &  0 &   0 &     0  & 41  \\ 
  0 &  0 &  0 &  1 & a_5 &   0 &   0 &  0 &  {-}1 &     0  & 41  \\ 
  0 &  0 &  0 &  1 & a_5 &   0 &   0 &  0 &  {-}1 &   {-}a_5  &  40 \\ 
  0 &  0 &  0 &  1 &  0 &  a_6 &   0 &  0 &   0 &     0  &  41 \\ 
  0 &  0 &  0 &  1 &  0 &  {-}1 &   0 & a_8 &  {-}1 &     0  & 41  \\ 
  0 &  0 &  0 &  1 &  0 &   0 &   0 &  0 &  a_9 &     0  &  41 \\ 
  0 &  0 &  0 &  1 &  0 &   0 &  a_7 &  0 &   0 &     0  & 41  \\ 
  0 &  0 &  0 &  0 &  1 &   0 &  a_7 &  0 &   0 &     0  & 41  \\ 
  0 &  0 &  0 &  0 &  1 &   0 &   0 &  0 &   0 &   a_{10}  & 36  \\ 
  0 &  0 &  0 &  0 &  0 &   1 &   0 & a_8 &   0 &     0  &  36 \\ 
  0 &  0 &  0 &  0 &  0 &   0 &   1 &  0 &   0 &   a_{10}  & 41  \\ 
  0 &  0 &  0 &  0 &  0 &   0 &   0 &  0 &   1 &   a_{10}  & 36  \\ 
\midrule
\multicolumn{10}{l|}{\text{Two-parameter families}} &   \\
  1 & a_2 &    a_3 &  0 &  0 &   0 &   0 &  0 &   0 &     0  & 36  \\ 
  1 & a_2 & {-}a_2{-}1 & a_4 &  0 & {-}a_4 &   0 &  0 &   0 &     0  & 41  \\ 
  1 &  0 &     0 & a_4 & a_5 &   0 &   0 &  0 & {-}a_4 & {-}a_5{-}1  & 41  \\ 
  1 &  0 &     0 &  0 & a_5 &   0 &   0 &  0 &   0 &   a_{10}  &  36 \\ 
  0 &  0 &     1 &  0 &  0 &  a_6 &   0 & a_8 &   0 &     0  & 36  \\ 
  0 &  0 &     0 &  0 &  0 &   0 &   0 &  1 &  a_9 &   a_{10} & 36  \\    
\midrule
\end{array}
$
\medskip
\caption{Submaximal rank solutions for degree 3 and multiplicity 2}
\label{allsolutions}
\end{table}

\begin{theorem}
\label{mult2theorem}
Let $R$ be an operator identity of degree 3 and multiplicity 2 on an associative triple system.  
Assume that the coefficient vector $v$ of $R$ in $\mathbb{F}^{10}$ satisfies these conditions:
\begin{itemize}
\item
the components of $v$ belong to the set $\{0,\pm 1, \pm 2 \}$, and
\item
$v$ produces submaximal rank in the matrix of consequences.
\end{itemize}
Then $R$ is equivalent to one of the operator identities 
whose coefficient vectors appear in Table \ref{allsolutions}.
\end{theorem}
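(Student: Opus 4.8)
The plan is to carry out an exhaustive computation following exactly the template used for Theorem~\ref{mult1theorem} in \S\ref{mult1}, but now with the ten-parameter generic identity $R$ displayed at the start of this section.

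First I would form the matrix of consequences of $R$: for each of the eight partial-composition patterns in display~\eqref{pclist}, and each admissible choice of the subscripts, I would compute the resulting operator identity of degree $5$ and multiplicity $3$, expand it in the basis of the $140$ lex-ordered operator monomials of that degree and multiplicity (using $n$-ary associativity to rewrite nested products into canonical form), and record the coefficient vector as a row; after discarding duplicate rows one obtains the $42 \times 140$ matrix whose nonzero entries lie among $a_1,\dots,a_{10}$. A symbolic rank computation over $\mathbb{Q}(a_1,\dots,a_{10})$ shows this matrix has full rank $42$, so the submaximal-rank locus is exactly the common vanishing locus of the $42 \times 42$ minors.

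Second, I would run the Cartesian product loop over all coefficient vectors with entries in $\{0,\pm 1,\pm 2\}$, discarding a vector whenever its leading nonzero entry is negative or its entries fail to be relatively prime, since by the definition of equivalence such a vector is a nonzero scalar multiple of one already considered; this leaves the $4882812$ normalized cases. For each surviving vector I would specialize the matrix and compute its rank, first over $\mathbb{F}_{1009}$ to save time. By the preceding lemma, any vector producing rank $42$ modulo $1009$ also produces rank $42$ over $\mathbb{Q}$, so only the vectors flagged as submaximal modulo $1009$ need be rechecked with exact rational arithmetic. Doing this confirms that exactly $387$ vectors give rank at most $41$ over $\mathbb{Q}$, that the modular and rational ranks agree in all these cases, and that the set of attained submaximal ranks is $\{30,34,35,36,40,41\}$.

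Third, I would organize the $387$ vectors. Inspection of the list in the Appendix shows that each solution is either one of nine isolated vectors or lies on one of the $27$ affine lines or $6$ affine planes recorded in Table~\ref{allsolutions}; conversely, every point with entries in $\{0,\pm 1,\pm 2\}$ on one of these affine subspaces, once normalized, appears among the $387$. The generic rank attached to each family is then checked by a single symbolic computation: substitute the parametric form into the matrix of consequences and compute the rank over the appropriate field of rational functions $\mathbb{Q}(a_i,\dots)$. Together these facts show that $R$ is equivalent to one of the identities whose coefficient vectors appear in Table~\ref{allsolutions}, which is the assertion. The obstacle here is computational rather than conceptual: the loop must examine nearly five million cases, each requiring the rank of a $42 \times 140$ sparse matrix, so the modular reduction and efficient sparse linear algebra are essential for the search to terminate in reasonable time; and some care is needed in the bookkeeping that matches the raw list of $387$ vectors against the parametrized families, so that no family is omitted and no spurious family is introduced.
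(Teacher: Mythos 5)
Your proposal follows the paper's own argument essentially verbatim: build the $42\times 140$ matrix of consequences from the partial compositions in display \eqref{pclist}, verify full rank $42$ over $\mathbb{Q}(a_1,\dots,a_{10})$, run the normalized Cartesian product loop over $\{0,\pm1,\pm2\}^{10}$ using $\mathbb{F}_{1009}$ first and the rank lemma to justify rechecking only the flagged cases rationally, and then match the resulting $387$ vectors against the isolated points and parametric families of Table \ref{allsolutions} with generic ranks confirmed over the appropriate function fields. This is the same computational proof the paper gives, with no substantive difference in method.
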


The operator identities corresponding to the first 3 isolated points are of particular interest.
We display them here explicitly:
\begin{align*}
&
    L^2({\ast}{\ast}{\ast})  
-2  L(L({\ast}){\ast}{\ast}) 
+   L^2({\ast}){\ast}{\ast}  
+2  L({\ast}L({\ast}){\ast}) 
-2  L({\ast}{\ast}L({\ast}))
\\
&\quad 
-2  L({\ast})L({\ast}){\ast} 
+2  L({\ast}){\ast}L({\ast}) 
+   {\ast}L^2({\ast}){\ast}  
-2  {\ast}L({\ast})L({\ast}) 
+   {\ast}{\ast}L^2({\ast})  
\equiv 0,
\\
&
    L^2({\ast}{\ast}{\ast})  
-2  L(L({\ast}){\ast}{\ast}) 
+   L^2({\ast}){\ast}{\ast}  
-2  L({\ast}L({\ast}){\ast}) 
-2  L({\ast}{\ast}L({\ast})) 
\\
&\quad 
+2  L({\ast})L({\ast}){\ast} 
+2  L({\ast}){\ast}L({\ast}) 
+   {\ast}L^2({\ast}){\ast}  
+2  {\ast}L({\ast})L({\ast}) 
+   {\ast}{\ast}L^2({\ast})  
\equiv 0,
\\
&
    L^2({\ast}{\ast}{\ast})  
-2  L(L({\ast}){\ast}{\ast}) 
+   L^2({\ast}){\ast}{\ast}  
-2  L({\ast}{\ast}L({\ast})) 
+2  L({\ast}){\ast}L({\ast}) 
+   {\ast}{\ast}L^2({\ast})  
\equiv 0.
\end{align*}

\begin{conjecture}
Let $R$ be an operator identity of degree 3 and multiplicity 2 on an associative triple system.  
Assume that the coefficient vector $v$ of $R$ in $\mathbb{F}^{10}$ produces submaximal rank in 
the matrix of consequences (with no restrictions on the components of $v$).
Then $R$ is equivalent to one of the identities of Theorem \ref{mult2theorem}.
\end{conjecture}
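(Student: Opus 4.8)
One inclusion is essentially already available. By the generic rank computations recorded in the last column of Table~\ref{allsolutions}, every coefficient vector lying on one of the listed $9+27+6$ families produces a matrix of consequences of rank at most $41$: the generic rank over the relevant field of rational functions is already at most $41$, specialising a parameter can only decrease the rank, and the identical vanishing of a $42\times 42$ minor as a polynomial in the parameters forces its vanishing in every characteristic. So the substance of the conjecture is the reverse inclusion: away from the union of those families, the $42\times 140$ matrix of consequences has full rank $42$. The plan is to prove this by exhibiting the submaximal-rank locus as an explicit algebraic set and decomposing it.

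Write $M=M(a_1,\dots,a_{10})$ for the matrix of consequences; its entries are linear forms in the $a_i$, and the condition $\operatorname{rank}M\le 41$ is the simultaneous vanishing of all $42\times 42$ minors of $M$. Thus the locus in question is $V(I)$ for the homogeneous determinantal ideal $I\subseteq\mathbb{F}[a_1,\dots,a_{10}]$ generated by those minors, so $V(I)$ is a cone. I would first prove that $V(I)$ is a finite union of linear subspaces, and then match those subspaces with the cones over the affine-linear families of Table~\ref{allsolutions} --- each listed family is cut out by linear equations in the $a_i$ with a few coordinates left free, so its Zariski closure is a linear subspace. The standard tool is a Gr\"obner basis of $I$ followed by a decomposition into minimal primes, after which one verifies that every minimal prime of $I$ is the ideal of one of the listed subspaces, and conversely that each of those ideals occurs. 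An alternative that avoids polynomial ideals altogether is a symbolic row reduction of $M$ over $\mathbb{F}(a_1,\dots,a_{10})$, organised by the eight blocks of partial compositions in display~\eqref{pclist} and bookkeeping exactly which vanishing patterns of the $a_i$ can destroy a pivot --- this is the method that was feasible for the small $42\times 42$ matrix in Figure~\ref{matrix31}, carried out one dimension up.

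The main obstacle is exactly the computational intractability flagged at the start of \S\ref{mult1}: $M$ has $\binom{140}{42}$ maximal minors, so $I$ cannot be written down explicitly, and a Gr\"obner basis in ten variables over $\mathbb{Q}$ is out of reach even from a small generating set --- which is why the result is stated only as a conjecture. A realistic attack would start from structural reductions: use the block form and extreme sparsity of $M$ (at most ten nonzero entries per row) to isolate a small set of minors whose common zero locus is already $V(I)$, and stratify $\mathbb{F}^{10}$ by the support of the coefficient vector to reduce to several ideals in far fewer variables. Finding such a reduction is the heart of the matter; in its absence, a mere enlargement of the finite Cartesian search cannot settle the conjecture, since a hypothetical missing component is a linear subspace defined over $\mathbb{Q}$ that may contain no nonzero vector with every coordinate in $\{0,\pm 1,\pm 2\}$. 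Lastly, a complete proof must track the characteristic: the initial search was run over $\mathbb{F}_{1009}$ and lifted to $\mathbb{Q}$ by the Lemma above, but small primes could in principle contribute extra components of $V(I)$, so the cases $\operatorname{char}\mathbb{F}\in\{2,3\}$ (and possibly a few more) would require separate handling.
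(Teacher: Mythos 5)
The statement you are addressing is presented in the paper as a conjecture, and the paper offers no proof of it; so there is no argument of the author's to compare yours against, and the only honest benchmark is whether your proposal closes the gap. It does not, and you say so yourself. Your preliminary observation is fine: for each family in Table~\ref{allsolutions}, every $42\times 42$ minor of the matrix of consequences vanishes identically as a polynomial in the free parameters, so every specialization (in any characteristic, since the minors have integer coefficients) has rank at most $41$. But that is the easy direction and is not what the conjecture asserts; the conjecture is exactly the reverse containment, namely that the zero locus of the ideal of maximal minors is contained in the union of the cones over the listed families. Your plan for that direction --- generate the determinantal ideal, decompose it into minimal primes, and match the components with the listed linear subspaces --- is precisely the determinantal-ideal and Gr\"obner-basis method of the author and Elgendy \cite{BE2022} for the binary case, which the paper reports (at the start of \S\ref{mult1}) was already found computationally impractical in the ternary setting. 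You identify that obstruction accurately, but you do not supply the ``structural reduction'' that would overcome it, so the heart of the argument is missing and the conjecture remains open.

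Two of your side remarks are worth retaining because they sharpen why the conjecture cannot be settled by more of the same computation: a hypothetical extra component of the submaximal-rank locus is a linear subspace defined over $\mathbb{Q}$ that need not contain any nonzero point with all coordinates in $\{0,\pm 1,\pm 2\}$, so no finite enlargement of the Cartesian search is conclusive; and the conjecture is stated over an arbitrary field $\mathbb{F}$, whereas all of the paper's evidence lives in characteristic $0$ and $p = 1009$, so small characteristics would require separate treatment. These are correct cautions, not proofs. As written, your submission is a well-informed research plan for an open problem, not a proof of the statement.
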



\section{Appendix}
\label{mult2app}

\noindent
The 387 submaximal rank solutions for degree 3 and multiplicity 2:





\begin{thebibliography}{99}

\bibitem{Aguiar}
\textsc{Marcelo Aguiar}:
Pre-Poisson algebras.
\emph{Letters in Mathematical Physics}
54, 263--277 (2000).

\bibitem{AB2025}
\textsc{Yu Hin (Gary) Au, Murray R. Bremner}:
A new generalization of the Narayana numbers inspired by linear operators on associative $d$-ary algebras.
\texttt{arXiv:2511.13671} [math.CO].
Submitted on 17 November 2025.

\bibitem{B2025}
\textsc{Murray R. Bremner}:
Operator identities of multiplicity 3 for associative algebras.
\texttt{arXiv:2507.23703} [math.RA].
Submitted on 31 July 2025.

\bibitem{BD-book}
\textsc{Murray Bremner, Vladimir Dotsenko}:
\emph{Algebraic Operads: An Algorithmic Companion}.
Chapman and Hall / CRC, Boca Raton, FL, USA, 2016.

\bibitem{BE2022}
\textsc{Murray R. Bremner, Hader Elgendy}:
A new classification of algebraic identities for linear operators on associative algebras.
\emph{Journal of Algebra}
596 (2022) 177--199.

\bibitem{Carlsson}
\textsc{Renate Carlsson}:
$N$-ary algebras.
\emph{Nagoya Mathematical Journal}, 
Volume 78, May 1980, 45--56.

\bibitem{GaoGuo2017}
\textsc{Xing Gao, Li Guo}:
Rota's classification problem, rewriting systems and Gr\"obner-Shirshov bases.
\emph{J. Algebra}
470 (2017) 219--253.

\bibitem{GaoZhang2018}
\textsc{Xing Gao, Tianjie Zhang}:
Averaging algebras, rewriting systems and Gr\"obner-Shirshov bases.
\emph{J. Algebra Appl.}
17 (2018), no. 7, 1850130, 26 pp.

\bibitem{Gnedbaye1}
\textsc{Victor Gnedbaye}:
Les alg\`ebres $k$-aires et leurs op\'erades. 
\emph{C. R. Acad. Sci. Paris}, 
t.~321, s\'erie I (1995), 147--152.

\bibitem{Gnedbaye2}
\textsc{Victor Gnedbaye}:
Op\'erades des alg\`ebres $(k{+}1)$-aires.
\emph{Operads: Proceedings of Renaissance Conferences}.
Contemporary Mathematics, 202, pages 83--113.
American Mathematical Society, 1997.

\bibitem{Guo2009}
\textsc{Li Guo}:
Operated semigroups, Motzkin paths and rooted trees.
\emph{J. Algebraic Combin.}
29 (2009), no.~1, 35--62.

\bibitem{Guo2012}
\textsc{Li Guo}:
\emph{An Introduction to Rota-Baxter algebra}.
Surveys of Modern Mathematics, 4.
International Press, Somerville, MA;
Higher Education Press, Beijing, 2012.

\bibitem{GSZ2011}
\textsc{Li Guo, William Y. Sit, Ronghua Zhang}:
On Rota's problem for linear operators in associative algebras.
\emph{ISSAC 2011: Proceedings of the 36th International Symposium on Symbolic and Algebraic Computation}, pages 147--154, ACM, New York, 2011.

\bibitem{GSZ2013}
\textsc{Li Guo, William Y. Sit, Ronghua Zhang}:
Differential type operators and Gr\"obner-Shirshov bases.
\emph{J. Symbolic Comput.}
52 (2013) 97--123.

\bibitem{LG2012}
\textsc{Peng Lei, Li Guo}:
Nijenhuis algebras, NS algebras, and N-dendriform algebras.
\emph{Frontiers of Mathematics in China}
7, 5, 827--846 (2012).

\bibitem{LV-book}
\textsc{Jean-Louis Loday, Bruno Vallette}:
\emph{Algebraic Operads}.
Grundlehren der mathematischen Wissenschaften, 346.
Springer, Heidelberg, Germany, 2012.

\bibitem{MSS-book}
\textsc{Martin Markl, Steve Shnider, Jim Stasheff}:
\emph{Operads in Algebra, Topology and Physics}.
Mathematical Surveys and Monographs, 96.
American Mathematical Society, Providence, RI, LSA, 2002.

\bibitem{OEIS}
\textsc{OEIS}:
\emph{The On-Line Encyclopedia of Integer Sequences}.
URL: \url{oeis.org}

\bibitem{PeiGuo2015}
\textsc{Jun Pei, Li Guo}:
Averaging algebras, Schr\"oder numbers, rooted trees and operads.
\emph{J. Algebraic Combin.}
42 (2015), no. 1, 73--109.

\bibitem{Rota1995}
\textsc{Gian-Carlo Rota}:
Baxter operators, an introduction.
\emph{Gian-Carlo Rota on Combinatorics},
504--512, Contemp. Mathematicians, Birkhäuser Boston, Boston, MA, 1995.

\bibitem{ZGG2023}
\textsc{Huhu Zhang, Xing Gao, Li Guo}:
Operator identities on Lie algebras, rewriting systems and Gr\"obner-Shirshov bases.
\emph{Journal of Algebra}
620 (2023) 585--629.

\bibitem{ZGWP2024}
\textsc{Huhu Zhang, Xing Gao, Tingzeng Wu, Xinyang Feng}:
Some new operated Lie polynomial identities and Gr\"obner-Shirshov bases.
\url{arXiv:2404.13894} [math.RA]

\end{thebibliography}
\end{document}